\theoremstyle{plain}
\newtheorem{theorem}{Theorem}
\newtheorem{lemma}{Lemma}
\newtheorem{proposition}{Proposition}
\newtheorem{conjecture}{Conjecture}
\theoremstyle{definition}
\newtheorem{problem}{Problem}
\theoremstyle{remark}
\newtheorem{remark}{Remark}
\numberwithin{equation}{section}
\begin{document}
\title[Upwards Askey--Wilson scheme]{On peculiar properties of generating
functions of some orthogonal polynomials}
\author{Pawe\l\ J. Szab\l owski}
\address{Department of Mathematics and Information Sciences,\\
Warsaw University of Technology\\
pl. Politechniki 1, 00-661 Warsaw, Poland}
\email{pawel.szablowski@gmail.com}
\date{March 11, 2012}
\subjclass{Primary 33D45, 26C05; Secondary 05A30}
\keywords{$q-$Hermite, Al-Salam--Chihara, continuous dual Hahn,
Askey--Wilson polynomials, Askey--Wilson scheme, generating functions,
Laguerre., Chebyshev polynomials.}
\thanks{The author is grateful to an unknown referee for drawing his
attention to Rainville's book and pointing out numerous misprints.}

\begin{abstract}
We prove that for $\left\vert x\right\vert \leq 1,\left\vert t\right\vert
<1\allowbreak ,$ $-1\allowbreak <q\allowbreak \leq 1$ and $n\geq 0:$ $%
\allowbreak \sum_{i\geq 0}\frac{t^{i}}{(q)_{i}}h_{n+i}\left( x|q\right)
\allowbreak =\allowbreak h_{n}\left( x|t,q\right) \allowbreak \allowbreak
\sum_{i\geq 0}\frac{t^{i}}{(q)_{i}}h_{i}\left( x|q\right) $, where $%
h_{n}\left( x|q\right) $ and $h_{n}\left( x|t,q\right) $ are respectively
the so called $q-$Hermite and the big $q-$Hermite polynomials and $(q)_{n}$
denotes the so called $q-$Pochhammer symbol. We prove similar equalities
involving big $q-$Hermite and Al-Salam--Chihara polynomials and
Al-Salam--Chihara and the so called continuous dual $q-$Hahn polynomials.
Moreover we are able to relate in this way some other 'ordinary ' orthogonal
polynomials such as e.g. Hermite, Chebyshev or Laguerre. These equalities
give new interpretation of the polynomials involved and moreover can give
rise to a simple method of generating more and more general (i.e. involving
more and more parameters) families of orthogonal polynomials.

We pose some conjectures concerning Askey--Wilson polynomials and their
possible generalizations. We prove that these conjectures are true for the
cases $q\allowbreak =\allowbreak 1$ (classical case) and $q\allowbreak
=\allowbreak 0$ (free case) thus paving the way to generalization of
Askey--Wilson polynomials at least in these two cases.
\end{abstract}

\maketitle

\section{Introduction and auxiliary results}

\subsection{Introduction}

In the paper we play around with the scheme:%
\begin{equation}
\sum_{i\geq 0}\frac{t^{i}}{(q)_{i}}p_{n+i}\left( x\right) \allowbreak
=\allowbreak q_{n}\left( x|t\right) \sum_{i\geq 0}\frac{t^{i}}{\left(
q\right) _{i}}p_{i}\left( x\right) ,  \label{*_*}
\end{equation}%
where the series on both sides converge for $x,t$ from certain Cartesian
product of bounded intervals and $\left\{ p_{i}\right\} _{i\geq 0}$ and $%
\left\{ q_{i}\right\} _{i\geq 0}$ are certain families of orthogonal
polynomials. So far we were able to relate in this scheme the $q-$Hermite
and the big $q-$Hermite polynomials, the big $q-$Hermite and the
Al-Salam--Chihara polynomials and also the Al-Salam-Chihara and the so
called continuous dual $q-$Hahn polynomials and $(q)_{n}$ denotes the so
called $q-$Pochhammer symbol.

In fact the idea of considering `shifted generating function' like the left
hand side of (\ref{*_*}) and relate it to the same generating function
without the shift (i.e. when $n\allowbreak =\allowbreak 0)$ is not new and
appeared in a version confined to two families of orthogonal polynomials
(Hermite and Laguerre) in the book of Rainville \cite{Rainville}.

In this paper we treat it as the general idea, show that it is useful and
give more examples. These examples concern particularly polynomials
satisfying the so called Askey-Wilson scheme. Moreover we treat it as the
tool to `move upwards' the Askey-Wilson scheme and in particular as a tool
towards possible generalization of the Askey--Wilson polynomials.

As we already pointed out above this scheme can be applied also to some
`classical' orthogonal polynomials as Rainville did in his book \cite%
{Rainville} after some necessary modification

Can we continue this scheme and interpret e.g. in this way Askey--Wilson
polynomials? Can we `go beyond' Askey--Wilson polynomials? From what we have
been able to prove so far it seems that we have a simple scheme to produce
families of orthogonal polynomials with more and more parameters.
Askey--Wilson polynomials provide the largest family (in the sense of the
number of parameters) of orthogonal polynomials of one variable that has
been relatively well described. Thus now there is a chance to `go beyond'
these polynomials. Of course it requires further research and cannot be
settled down in one small size article.

We interpret so obtained relationships between the above mentioned
polynomials with the help of ordinary $q-$difference operator just acting in
the spirit of recent attempt to move upwards the Askey-Wilson scheme done by
Atakishiyeva\&Atakishiyev in \cite{At-At10} but with much simpler operators
and what is more important indicating the way to `climb upwards this scheme'
beyond Askey-Wilson polynomials.

The paper is organized as follows. In the next two subsections we provide
simple introduction to $q-$series theory presenting typical notation used
and presenting a few typical families of the so called basic orthogonal
polynomials. The word basic comes from the base which is a parameter in most
cases denoted by $q$ and such that $-1<q\leq 1$. Then in Section \ref{glow}
we present our main results, open questions and remarks are in Section \ref%
{open} while less interesting laborious proofs are in Section \ref{dowody}.

\subsection{Notation}

We use notation traditionally used in the so called $q-$series theory. Since
not all readers are familiar with it we will recall now this notation.

$q$ is a parameter such that $-1<q\leq 1$ unless otherwise stated. Let us
define $\left[ 0\right] _{q}\allowbreak =\allowbreak 0;$ $\left[ n\right]
_{q}\allowbreak =\allowbreak 1+q+\ldots +q^{n-1}\allowbreak ,$ $\left[ n%
\right] _{q}!\allowbreak =\allowbreak \prod_{j=1}^{n}\left[ j\right] _{q},$
with $\left[ 0\right] _{q}!\allowbreak =1$ and%
\begin{equation*}
\QATOPD[ ] {n}{k}_{q}\allowbreak =\allowbreak \left\{ 
\begin{array}{ccc}
\frac{\left[ n\right] _{q}!}{\left[ n-k\right] _{q}!\left[ k\right] _{q}!} & 
, & n\geq k\geq 0 \\ 
0 & , & otherwise%
\end{array}%
\right. .
\end{equation*}%
It will be useful to use the so called $q-$Pochhammer symbol for $n\geq 1:$%
\begin{eqnarray*}
\left( a;q\right) _{n} &=&\prod_{j=0}^{n-1}\left( 1-aq^{j}\right) , \\
\left( a_{1},a_{2},\ldots ,a_{k};q\right) _{n}\allowbreak &=&\allowbreak
\prod_{j=1}^{k}\left( a_{j};q\right) _{n}.
\end{eqnarray*}%
with $\left( a;q\right) _{0}=1$. Often $\left( a;q\right) _{n}$ as well as $%
\left( a_{1},a_{2},\ldots ,a_{k};q\right) _{n}$ will be abbreviated to $%
\left( a\right) _{n}$ and $\left( a_{1},a_{2},\ldots ,a_{k}\right) _{n},$ if
it will not cause misunderstanding.

It is easy to notice that $\left( q\right) _{n}=\left( 1-q\right) ^{n}\left[
n\right] _{q}!$ and that\newline
$\QATOPD[ ] {n}{k}_{q}\allowbreak =$\allowbreak $\allowbreak \left\{ 
\begin{array}{ccc}
\frac{\left( q\right) _{n}}{\left( q\right) _{n-k}\left( q\right) _{k}} & ,
& n\geq k\geq 0 \\ 
0 & , & otherwise%
\end{array}%
\right. $. \newline
Notice that $\left[ n\right] _{1}\allowbreak =\allowbreak n,\left[ n\right]
_{1}!\allowbreak =\allowbreak n!,$ $\QATOPD[ ] {n}{k}_{1}\allowbreak
=\allowbreak \binom{n}{k},$ $\left( a;1\right) _{n}\allowbreak =\allowbreak
\left( 1-a\right) ^{n}$ and $\left[ n\right] _{0}\allowbreak =\allowbreak
\left\{ 
\begin{array}{ccc}
1 & if & n\geq 1 \\ 
0 & if & n=0%
\end{array}%
\right. ,$ $\left[ n\right] _{0}!\allowbreak =\allowbreak 1,$ $\QATOPD[ ] {n%
}{k}_{0}\allowbreak =\allowbreak 1,$ $\left( a;0\right) _{n}\allowbreak
=\allowbreak \left\{ 
\begin{array}{ccc}
1 & if & n=0 \\ 
1-a & if & n\geq 1%
\end{array}%
\right. .$

\subsection{Auxiliary notions and results}

\subsubsection{Orthogonal polynomials}

Following \cite{IA}, or \cite{Koek} we will define the following families of
polynomials.

The $q-$ Hermite (briefly qH) polynomials denoted by $\left\{ h_{n}\left(
x|q\right) \right\} _{n\geq 0}$ constitute the one parameter family of
orthogonal polynomials satisfying the following 3-term recurrence:%
\begin{equation}
h_{n+1}\left( x|q\right) \allowbreak =\allowbreak 2xh_{n}\left( x|q\right)
-(1-q^{n})h_{n-1}\left( x|q\right) ,  \label{_H}
\end{equation}%
with $h_{-1}\left( x|q\right) \allowbreak =\allowbreak 0$ and $h_{0}\left(
x|q\right) \allowbreak =\allowbreak 1.$

The the big $q-$ Hermite polynomials (briefly bqH) denoted by $\left\{
h_{n}\left( x|a,q\right) \right\} _{n\geq -1}$ constitute the $2-$parameter
family of orthogonal polynomials that satisfy the following 3-term
recurrence:%
\begin{equation}
h_{n+1}\left( x|a,q\right) \allowbreak =\allowbreak (2x-aq^{n})h_{n}\left(
x|a,q\right) -(1-q^{n})h_{n-1}\left( x|a,q\right) ,  \label{_bH}
\end{equation}%
with $h_{-1}\left( x|a,q\right) \allowbreak =\allowbreak 0,$ $h_{0}\left(
x|a,q\right) \allowbreak =\allowbreak 1$.

The Al-Salam--Chihara polynomials (briefly ASC) denoted by $\left\{
Q_{n}\left( x|a,b,q\right) \right\} _{n\geq -1}$ constitute the $3-$%
parameter family of orthogonal polynomials that satisfy the following 3-term
recurrence:%
\begin{equation}
Q_{n+1}\left( x|a,b,q\right) =(2x-(a+b)q^{n})Q_{n}\left( x|a,b,q\right)
-(1-abq^{n-1})(1-q^{n})Q_{n-1}\left( x|y,\rho ,q\right) ,  \label{_Q}
\end{equation}%
with $\allowbreak Q_{-1}\left( x|a,b,q\right) \allowbreak =\allowbreak 0,$ $%
Q_{0}\left( x|a,b,q\right) \allowbreak =\allowbreak 1$. \newline
For $n\allowbreak =\allowbreak 0$ we set $(1-abq^{n-1})(1-q^{n})Q_{n-1}%
\left( x|y,\rho ,q\right) $ to $0.$

The continuous dual Hahn polynomials (briefly c2h) denoted by $\left\{ \psi
_{n}\left( x|a,b,c,q\right) \right\} _{n\geq -1}$ constitute the $4-$%
parameter family of orthogonal polynomials that satisfy the following 3-term
recurrence: 
\begin{equation}
\psi _{n+1}\left( x|a,b,c,q\right) \allowbreak =\allowbreak (2x-d_{n})\psi
_{n}\left( x|a,b,c,q\right) -f_{n-1}(1-q^{n})\psi _{n-1}\left(
x|a,b,c,q\right) ,  \label{_c2h}
\end{equation}%
with $\allowbreak \psi _{-1}\left( x|a,b,c,q\right) \allowbreak =0,$ $\psi
_{0}\left( x|a,b,c,q\right) \allowbreak =1$ and coefficients $d_{n}$ and $%
f_{n}$ given by for $n\geq 0:$ 
\begin{eqnarray*}
d_{n}\allowbreak &=&\allowbreak (a+b+c)q^{n}+abcq^{n-1}(1-q^{n}-q^{n+1}), \\
f_{n}\allowbreak &=&\allowbreak (1-abq^{n})(1-acq^{n})(1-bcq^{n}).
\end{eqnarray*}%
Again for $n\allowbreak =\allowbreak 0$ we set $f_{n-1}(1-q^{n})\psi
_{n-1}\left( x|a,b,c,q\right) \allowbreak =\allowbreak 0.$

Orthogonality of all these polynomials takes place on $[-1,1].$ From
Favard's theorem it follows that if for all $n>0:\allowbreak
(1-abq^{n-1})\allowbreak \geq \allowbreak 0$ and $%
(1-abq^{n})(1-acq^{n})(1-bcq^{n})\allowbreak \geq \allowbreak 0$ then
respectively ASC and c2h polynomials are orthogonal with respect to a
positive measure.

Let us also mention the so called Askey--Wilson polynomials (briefly AW) $%
\left\{ AW_{n}\right\} _{n\geq -1}$. They can be defined with the help of
the basic hypergeometric function as it was done in the original paper \cite%
{AW85} of Askey and Wilson or by the 3-term recurrence as done in \cite{Koek}
or \cite{IA}. The polynomials that we will call Askey-Wilson and denote $%
AW_{n}$ are in fact equal to $2^{n}p_{n}\left( x\right) $ where polynomials $%
p_{n}$ are defined by 3-term recurrence (3.1.5) in \cite{Koek}. For our
purpose it will be enough to define them in the following way:%
\begin{equation}
AW_{n}(x|a,b,c,d,q)\allowbreak =\allowbreak \sum_{i=0}^{n}\QATOPD[ ] {n}{i}%
_{q}\left( -a\right) ^{n-i}q^{\binom{n-i}{2}}\frac{\left(
bcq^{i},bdq^{i},cdq^{i}\right) _{n-i}}{\left( abcdq^{n+i-1}\right) _{n-i}}%
\psi _{i}\left( x|b,c,d,q\right) ,  \label{AW}
\end{equation}%
as proved in \cite{Szab-bAW} formula (2.5). Again we understand that $%
AW_{-1}(x|a,b,c,d,q)\allowbreak =\allowbreak 0$ and $AW_{0}(x|a,b,c,d,q)%
\allowbreak =\allowbreak 1.$

Finally let us mention Chebyshev polynomials $\left\{ U_{n}\right\} _{n\geq
-1}$ of the second kind that satisfy the following 3-term recurrence:%
\begin{equation}
2xU_{n}\left( x\right) =U_{n+1}\left( x\right) +U_{n-1}\left( x\right) ,
\label{_cheb}
\end{equation}%
with $U_{-1}\left( x\right) \allowbreak =\allowbreak 0$ and $U_{0}\left(
x\right) \allowbreak =\allowbreak 1.$ These polynomials will play an
auxiliary r\^{o}le.

\subsubsection{Properties of some orthogonal polynomials}

We have the following elementary Lemma:

\begin{lemma}
\label{lin_komb}Suppose $a_{i}\left( x\right) \allowbreak =\allowbreak
\sum_{k=0}^{s}\beta _{k}U_{i-k}\left( x\right) $ for $i\allowbreak
=\allowbreak 1,2,\ldots ,n$ for some constants $\beta _{j},$ $j\allowbreak
=\allowbreak 0,\ldots ,s$ with $n\geq s.$ Suppose also that for $m\geq n$ we
have 
\begin{equation*}
a_{m+1}\left( x\right) \allowbreak =\allowbreak 2xa_{m}\left( x\right)
-a_{m-1}\left( x\right) ,
\end{equation*}%
then $\forall m\geq n$ : 
\begin{equation*}
a_{m}\left( x\right) \allowbreak =\allowbreak \sum_{k=0}^{s}\beta
_{k}U_{m-k}\left( x\right) .
\end{equation*}
\end{lemma}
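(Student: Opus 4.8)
The plan is to prove this by induction on $m \geq n$. The base cases are essentially free: for $m = n$ (and indeed for all indices $1 \leq i \leq n$), the representation $a_m(x) = \sum_{k=0}^{s}\beta_k U_{m-k}(x)$ holds by the very hypothesis of the lemma. So the real content is the inductive step, and the key observation is that the Chebyshev polynomials $U_j$ themselves satisfy exactly the same recurrence that the $a_m$ satisfy for $m \geq n$, namely $2x U_j(x) = U_{j+1}(x) + U_{j-1}(x)$ from \eqref{_cheb}.

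Concretely, I would assume as the inductive hypothesis that the claimed formula holds for both indices $m$ and $m-1$, where $m \geq n$, i.e.\ that $a_m(x) = \sum_{k=0}^{s}\beta_k U_{m-k}(x)$ and $a_{m-1}(x) = \sum_{k=0}^{s}\beta_k U_{m-1-k}(x)$. Using the recurrence $a_{m+1}(x) = 2x a_m(x) - a_{m-1}(x)$, I would substitute these two expansions and compute
\begin{equation*}
a_{m+1}(x) = 2x\sum_{k=0}^{s}\beta_k U_{m-k}(x) - \sum_{k=0}^{s}\beta_k U_{m-1-k}(x) = \sum_{k=0}^{s}\beta_k\left(2x U_{m-k}(x) - U_{m-1-k}(x)\right).
\end{equation*}
Then I would apply the Chebyshev recurrence $2x U_{m-k}(x) = U_{m-k+1}(x) + U_{m-k-1}(x)$ inside the sum, so that each bracketed term collapses to $U_{m-k+1}(x) = U_{(m+1)-k}(x)$, yielding $a_{m+1}(x) = \sum_{k=0}^{s}\beta_k U_{(m+1)-k}(x)$, which is exactly the desired formula at index $m+1$. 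This closes the induction.

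The one point that needs a little care — and what I expect to be the only genuine obstacle — is whether the Chebyshev recurrence is valid for every index that appears inside the sum when $m$ is at its smallest permitted value. Since $k$ ranges up to $s$ and we need indices as low as $m-k-1 = m-s-1$, I must make sure these are nonnegative or that the convention $U_{-1} = 0$ (from \eqref{_cheb}) keeps the recurrence $2x U_j = U_{j+1} + U_{j-1}$ formally correct at the boundary. The hypothesis $n \geq s$ guarantees $m - s - 1 \geq n - s - 1 \geq -1$, so the lowest index encountered is at worst $-1$, where the stated initial condition $U_{-1}(x) = 0$ makes the recurrence consistent. Hence no special-casing beyond invoking this convention is required, and the induction goes through uniformly.
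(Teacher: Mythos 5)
Your proof is correct and takes essentially the same route as the paper's: an induction on $m$ in which the expansions of $a_m$ and $a_{m-1}$ are substituted into the recurrence $a_{m+1}=2xa_m-a_{m-1}$ and the Chebyshev recurrence $2xU_j=U_{j+1}+U_{j-1}$ collapses the result to $\sum_{k=0}^{s}\beta_k U_{m+1-k}(x)$. Your explicit check of the boundary indices via the convention $U_{-1}(x)=0$ is a point the paper leaves implicit, but otherwise the two arguments coincide.
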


\begin{proof}
The proof is by induction. For $m\allowbreak =\allowbreak n$ it is true by
assumption. Let us assume that it is true for $m\allowbreak =\allowbreak j.$
Then for $m\allowbreak =\allowbreak j+1$ we have $a_{j+1}\left( x\right)
\allowbreak =\allowbreak 2x\sum_{k=0}^{s}\beta _{k}U_{j-k}\left( x\right)
\allowbreak -\allowbreak \sum_{k=0}^{s}\beta _{k}U_{j-1-k}\left( x\right)
\allowbreak =\allowbreak \sum_{k=0}^{s}\beta _{k}(U_{j+1-k}\left( x\right)
+U_{j-1-k}\left( x\right) )-\allowbreak \sum_{k=0}^{s}\beta
_{k}U_{j-1-k}\left( x\right) \allowbreak =\allowbreak $

$\sum_{k=0}^{s}\beta _{k}U_{j+1-k}\left( x\right) .$
\end{proof}

As an immediate corollary we have the following Proposition with some
assertions already known. We present them here together in order to expose
the regularities and pave the way to possible generalizations. This
proposition will help to justify the conjecture concerning generalization of
our main result that will be presented below in Section \ref{open}.

\begin{proposition}
\label{q->0}i) $\forall n\geq 0:$ 
\begin{equation*}
h_{n}\left( x|0\right) \allowbreak =U_{n}\left( x\right) ,\allowbreak
h_{n}\left( x|a,0\right) \allowbreak =\allowbreak U_{n}\left( x\right)
\allowbreak -\allowbreak aU_{n-1}\left( x\right) ,
\end{equation*}

ii) $\forall n\geq 1:$%
\begin{equation*}
Q_{n}(x|a,b,0)\allowbreak =\allowbreak U_{n}(x)\allowbreak -\allowbreak
(a+b)U_{n-1}(x)+abU_{n-2}(x),
\end{equation*}

iii) $\forall n\geq 1:$%
\begin{equation*}
\psi _{n}\left( x|a,b,c,0\right) \allowbreak =\allowbreak
U_{n}(x)\allowbreak -\allowbreak (a+b+c)U_{n-1}(x)\allowbreak +\allowbreak
(ab\allowbreak +\allowbreak cb\allowbreak +\allowbreak
ac)U_{n-2}(x)\allowbreak -\allowbreak abcU_{n-3}\left( x\right) ,
\end{equation*}

iv) $\forall n\geq 2:$%
\begin{gather*}
AW_{n}\left( x|a,b,c,d,0\right) \allowbreak =\allowbreak U_{n}\left(
x\right) \allowbreak -\allowbreak (a+b+c+d)U_{n-1}\left( x\right) \allowbreak
\\
+\allowbreak (ab+ac+ad+bc+bd+cd)U_{n-2}\left( x\right) \allowbreak
-\allowbreak (abc+abd+bcd+acd)U_{n-3}\allowbreak +\allowbreak
abcdU_{n-4}\left( x\right) ,
\end{gather*}%
where $AW_{n}$ denotes Askey--Wilson polynomial as defined by (\ref{AW}).
\end{proposition}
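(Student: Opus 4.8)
The plan is to reduce all four parts to the single mechanism of Lemma \ref{lin_komb}: at $q=0$ each of the three-term recurrences (\ref{_H}), (\ref{_bH}), (\ref{_Q}), (\ref{_c2h}) loses its parameter-dependent pieces once the index is large enough and degenerates into the pure Chebyshev recurrence (\ref{_cheb}), namely $p_{m+1}=2xp_m-p_{m-1}$. Concretely, using $q^m=0$ for $m\ge 1$ but $q^0=1$, one checks that this degeneration sets in for $m\ge 1$ (qH and bqH) and for $m\ge 2$ (ASC and c2h), because the terms $aq^m$, $(a+b)q^m$, $d_m$ and the deviations of the off-diagonal coefficients from $1$ all vanish there. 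Thus each family at $q=0$ is ``eventually Chebyshev'', and the claimed right-hand sides are fixed linear combinations of the $U_{n-k}$ whose coefficients are the elementary symmetric functions of the parameters --- exactly the setting Lemma \ref{lin_komb} is built for.

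For parts i)--iii) I would proceed family by family. First compute the few lowest polynomials directly from the recurrence at $q=0$, watching the boundary index $m=0$ (where $1-q^0$ kills the lower term) and $m=1$ (where $q^0=1$ keeps one parameter-dependent term alive, e.g. $d_1=abc$). For qH this yields $h_0=1=U_0$ and $h_1=2x=U_1$, so $h_n(x|0)=U_n$ for all $n$ at once. For bqH, ASC and c2h I would verify that the polynomials of index $1$ through $s$ coincide with the asserted combinations $U_n-aU_{n-1}$, $U_n-(a+b)U_{n-1}+abU_{n-2}$ and $U_n-(a+b+c)U_{n-1}+(ab+ac+bc)U_{n-2}-abcU_{n-3}$, and then invoke Lemma \ref{lin_komb} with $s=1,2,3$ to propagate the identity to every higher index. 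The stated ranges $n\ge 0,1,1$ are precisely the ones on which the formulas survive: at the excluded lower index the boundary value fails to match (for instance $Q_0=1$ against $U_0-(a+b)U_{-1}+abU_{-2}=1-ab$).

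For part iv) I would avoid a recurrence for $AW_n$ and instead substitute $q=0$ straight into the explicit expansion (\ref{AW}). There $\QATOPD[ ] {n}{i}_{0}=1$ while the factor $q^{\binom{n-i}{2}}$ vanishes whenever $n-i\ge 2$, so for $n\ge 2$ only the terms $i=n$ and $i=n-1$ remain; the surviving Pochhammer factors $(bcq^{i},bdq^{i},cdq^{i})_{n-i}$ and $(abcdq^{n+i-1})_{n-i}$ collapse to $1$, leaving $AW_n(x|a,b,c,d,0)=\psi_n(x|b,c,d,0)-a\,\psi_{n-1}(x|b,c,d,0)$. Feeding in part iii) for both $\psi_n$ and $\psi_{n-1}$ and collecting the $U_{n-k}$, the coefficients reorganize into $a+b+c+d$, $ab+ac+ad+bc+bd+cd$, $abc+abd+acd+bcd$ and $abcd$, which is exactly the asserted formula. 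The restriction $n\ge 2$ is forced because at $n=1$ the factor $q^{n-1}=q^{0}=1$ blocks the collapse of those Pochhammer factors.

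The conceptual skeleton (eventually-Chebyshev together with Lemma \ref{lin_komb}) is routine; the only genuinely delicate point will be the bookkeeping of the low-index base cases. Here one must be consistent about the values of $U_n$ at negative indices --- in particular $U_{-1}=0$ and $U_{-2}=-1$, the Chebyshev extension obtained by running (\ref{_cheb}) backwards --- because these enter at the smallest admissible $n$: the c2h identity at $n=1$ produces the correct constant $+abc$ only through $-abc\,U_{-2}=+abc$. Getting these conventions and the $m=0,1$ recurrence coefficients right, and confirming that the singular-looking $q^{-1}$ appearing in $d_0$ cancels exactly, is where essentially all of the care (as opposed to difficulty) resides.
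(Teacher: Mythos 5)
Your proof is correct and takes essentially the same approach as the paper: the paper's own proof rests on exactly the two devices you use, namely Lemma \ref{lin_komb} applied after noting that at $q=0$ the recurrences (\ref{_H})--(\ref{_c2h}) coincide with (\ref{_cheb}) for large index (with low-index cases checked directly), together with the collapse of the connection formulas at $q=0$, in particular $AW_{n}(x|a,b,c,d,0)=\psi _{n}(x|b,c,d,0)-a\psi _{n-1}(x|b,c,d,0)$, which is precisely how you handle part iv). Your bookkeeping of the boundary conventions ($U_{-2}=-1$, the cancellation of $q^{-1}$ in $d_{0}$, and the failure of iv) at $n=1$) is more explicit than the paper's but fully consistent with its cautionary remarks.
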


\begin{proof}
Is shifted to Section \ref{dowody}.
\end{proof}

\begin{remark}
\label{zera}Notice that $\forall n\geq -1$ 
\begin{gather*}
h_{n}\left( x|0\right) =U_{n}\left( x\right) ,~h_{n}\left( x|0,q\right)
\allowbreak =\allowbreak h_{n}\left( x|q\right) ,~Q_{n}\left( x|a,0,q\right)
\allowbreak =\allowbreak h_{n}\left( x|a,q\right) , \\
~\psi _{n+1}\left( x|a,b,0,q\right) =Q_{n}\left( x|a,b,q\right) .
\end{gather*}
\end{remark}

\begin{remark}
\label{intuition} To support intuition let us remark following e.g. \cite%
{Szablowski2010(1)} that $\lim_{q\rightarrow 1^{-}}\allowbreak h_{n}\left( x%
\frac{\sqrt{1-q}}{2}|a\sqrt{1-q},q\right) \allowbreak /\allowbreak
(1-q)^{n/2}\allowbreak =\allowbreak He_{n}\left( x-a\right) $, where $He_{n}$
denotes the $n-th$ so called `probabilistic' Hermite polynomial i.e.
polynomial orthogonal with respect to measure with the density $\exp
(-x^{2}/2)/\sqrt{2\pi }.$
\end{remark}

For completeness of the exposition let us mention that polynomials $q-$%
Hermite, bqH, ASC are related mutually by the following relationships: (see
e.g. \cite{Szablowski2010(3)} and \cite{At-At11}(4.9) )%
\begin{eqnarray}
h_{n}\left( x|a,q\right) \allowbreak &=&\sum_{k=0}^{n}\QATOPD[ ] {n}{k}%
_{q}(-a)^{k}q^{\binom{k}{2}}h_{n-k}\left( x|q\right) ,  \label{hnaha} \\
Q_{n}\left( x|a,b,q\right) \allowbreak &=&\allowbreak \sum_{k=0}^{n}\QATOPD[ 
] {n}{k}_{q}(-a)^{k}q^{\binom{k}{2}}h_{n-m}\left( x|b,q\right) .
\label{anah}
\end{eqnarray}%
Recently the c2h polynomials were related to the ASC polynomials by the
following relationship (after slight modification of \cite{At-At11}(2.7)):%
\begin{equation}
\psi _{n}\left( x|a,b,c,q\right) \allowbreak =\allowbreak \sum_{i=0}^{n}%
\QATOPD[ ] {n}{i}_{q}\left( -a\right) ^{i}q^{\binom{i}{2}%
}(bcq^{n-i})_{i}Q_{n-i}\left( x|b,c,q\right) .  \label{psinaQ}
\end{equation}

\subsection{General Result}

We end up this section by the presentation of an auxiliary simple result
that will be used several times in the sequel. We have the following
Proposition.

\begin{proposition}
\label{pomoc} Let $\sigma _{n}\allowbreak \left( \rho ,q\right) =\allowbreak
\sum_{i\geq 0}\frac{\rho ^{i}}{\left( q\right) _{i}}\xi _{n+i}$ for $%
\left\vert \rho \right\vert <1$ and certain sequence $\left\{ \xi
_{m}\right\} _{m\geq 0}$ such that $\sigma _{n}$ exists for every $n.$ Then 
\begin{equation}
\sigma _{n}\left( \rho q^{m},q\right) \allowbreak =\allowbreak
\sum_{k=0}^{m}\left( -1\right) ^{k}\QATOPD[ ] {m}{k}_{q}q^{\binom{k}{2}}\rho
^{k}\sigma _{n+k}(\rho ,q).  \label{sigmanm}
\end{equation}
\end{proposition}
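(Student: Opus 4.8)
The plan is to argue by induction on $m$. The case $m=0$ is immediate, since the right-hand side of (\ref{sigmanm}) then reduces to its single $k=0$ term $\sigma _{n}(\rho ,q)$. All the manipulations below are term-by-term rearrangements of the series defining the $\sigma _{n}$, which are legitimate because each such series converges absolutely on the assumed range $\left\vert \rho \right\vert <1$ (note $\left\vert \rho q^{m}\right\vert \leq \left\vert \rho \right\vert <1$ as well); I will not belabour this point.

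First I would dispose of the base case $m=1$, which I expect to carry the only genuine computation. Writing out $\sigma _{n}(\rho ,q)-\rho \,\sigma _{n+1}(\rho ,q)$ and shifting the index of the second series by one, the $i=0$ term is $\xi _{n}$, while for $i\geq 1$ the coefficient of $\xi _{n+i}$ is $\rho ^{i}\bigl(\frac{1}{(q)_{i}}-\frac{1}{(q)_{i-1}}\bigr)$. The relation $(q)_{i}=(1-q^{i})(q)_{i-1}$ gives $\frac{1}{(q)_{i}}-\frac{1}{(q)_{i-1}}=\frac{q^{i}}{(q)_{i}}$, so every coefficient collapses to $\frac{(\rho q)^{i}}{(q)_{i}}$, which is exactly the $i$-th coefficient of $\sigma _{n}(\rho q,q)$. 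Hence
\begin{equation*}
\sigma _{n}(\rho q,q)=\sigma _{n}(\rho ,q)-\rho \,\sigma _{n+1}(\rho ,q),
\end{equation*}
and this is (\ref{sigmanm}) for $m=1$, since $\QATOPD[ ] {1}{0}_{q}=\QATOPD[ ] {1}{1}_{q}=1$ and the two exponents $\binom{0}{2},\binom{1}{2}$ both vanish.

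For the inductive step I would assume (\ref{sigmanm}) for a fixed $m$ and every $n$, and write $\sigma _{n}(\rho q^{m+1},q)=\sigma _{n}\bigl((\rho q^{m})q,q\bigr)$. Applying the $m=1$ identity with $\rho $ replaced by $\rho q^{m}$ yields
\begin{equation*}
\sigma _{n}(\rho q^{m+1},q)=\sigma _{n}(\rho q^{m},q)-\rho q^{m}\,\sigma _{n+1}(\rho q^{m},q).
\end{equation*}
Now I would expand each of $\sigma _{n}(\rho q^{m},q)$ and $\sigma _{n+1}(\rho q^{m},q)$ by the induction hypothesis, and reindex the second (shifted) sum by $k\mapsto k-1$ so that both are written in terms of $\sigma _{n+k}(\rho ,q)$. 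Collecting the coefficient of $(-1)^{k}\rho ^{k}\sigma _{n+k}(\rho ,q)$ and dividing through by $q^{\binom{k}{2}}$, the claim for $m+1$ reduces to
\begin{equation*}
\QATOPD[ ] {m}{k}_{q}+q^{m+1-k}\QATOPD[ ] {m}{k-1}_{q}=\QATOPD[ ] {m+1}{k}_{q},
\end{equation*}
where I have used $\binom{k-1}{2}-\binom{k}{2}=1-k$ to turn the factor $q^{m}\cdot q^{\binom{k-1}{2}-\binom{k}{2}}$ coming from the shifted term into $q^{m+1-k}$. This is precisely one of the two $q$-Pascal recurrences for the Gaussian binomial coefficients, so the induction closes.

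The main obstacle is entirely bookkeeping: one must keep the powers of $q$ straight across the reindexing, and it is the elementary identity $\binom{k-1}{2}-\binom{k}{2}=1-k$ that makes the surviving exponent match the $q^{m+1-k}$ demanded by the $q$-Pascal rule. A fully equivalent, non-inductive route would instead equate coefficients of $\xi _{n+i}$ directly on the two sides of (\ref{sigmanm}), reducing everything to the single scalar identity $q^{mi}=\sum_{k\geq 0}(-1)^{k}\QATOPD[ ] {m}{k}_{q}q^{\binom{k}{2}}\frac{(q)_{i}}{(q)_{i-k}}$ (the terms with $k>i$ vanishing automatically because $(q)_{i}/(q)_{i-k}$ then contains a zero factor, and those with $k>m$ because the $q$-binomial vanishes). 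That scalar identity is, however, most cleanly proved by the same $q$-Pascal induction, so I would prefer the argument above.
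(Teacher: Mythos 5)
Your proof is correct and takes essentially the same route as the paper's: both rest on the one-step identity $\sigma _{n}\left( \rho q,q\right) \allowbreak =\allowbreak \sigma _{n}\left( \rho ,q\right) -\rho \sigma _{n+1}\left( \rho ,q\right) $ (the paper's (\ref{sigman}), which you simply apply with $\rho $ replaced by $\rho q^{m}$), followed by induction on $m$ closed by the same $q$-Pascal recurrence $\QATOPD[ ] {m}{k}_{q}+q^{m+1-k}\QATOPD[ ] {m}{k-1}_{q}\allowbreak =\allowbreak \QATOPD[ ] {m+1}{k}_{q}$ and the same exponent bookkeeping $\binom{k-1}{2}+k-1\allowbreak =\allowbreak \binom{k}{2}$. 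Your version is in fact a somewhat cleaner write-up of the identical argument.
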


\begin{proof}
An easy, not very interesting proof by induction is shifted to Section \ref%
{dowody}.
\end{proof}

\section{Main Results\label{glow}}

\begin{theorem}
\label{basic} i) For $\forall n\geq 0;\allowbreak x^{2}\leq 1;t^{2}<1:$ 
\begin{equation}
\sum_{i\geq 0}\frac{t^{i}}{\left( q\right) _{i}}h_{i+n}\left( x|q\right)
\allowbreak =\allowbreak h_{n}\left( x|t,q\right) \sum_{i\geq 0}\frac{t^{i}}{%
\left( q\right) _{i}}h_{i}\left( x|q\right) ,  \label{_sbh}
\end{equation}%
where $h_{n}\left( x|t,q\right) $ is the bqH polynomial defined by (\ref{_bH}%
).

ii) For $\forall n\geq 0;\allowbreak x^{2}\leq 1;\allowbreak
t^{2}<1\allowbreak ;\left\vert at\right\vert <1:$ 
\begin{equation}
\sum_{i\geq 0}\frac{t^{i}}{\left( q\right) _{i}}h_{i+n}\left( x|a,q\right)
\allowbreak =\allowbreak \frac{Q_{n}\left( x|a,t,q\right) }{\left( at\right)
_{n}}\sum_{i\geq 0}\frac{t^{i}}{\left( q\right) _{i}}h_{i}\left(
x|a,q\right) ,  \label{_sASC}
\end{equation}%
where $Q_{n}\left( x|a,t,q\right) $ is the ASC polynomial defined by (\ref%
{_Q}).

iii) For $\forall n\geq 0;\allowbreak \left\vert x\right\vert \leq
1;\allowbreak t^{2}<1\allowbreak ;\left\vert at\right\vert ,\left\vert
bt\right\vert <1:$ 
\begin{equation}
\sum_{i\geq 0}\frac{t^{i}}{\left( q\right) _{i}}Q_{i+n}\left( x|a,b,q\right)
\allowbreak =\allowbreak \frac{\psi _{n}\left( x|a,b,t,q\right) }{\left(
at,bt\right) _{n}}\sum_{i\geq 0}\frac{t^{i}}{\left( q\right) _{i}}%
Q_{i}\left( x|a,b,q\right) ,  \label{_sc2h}
\end{equation}%
where polynomial $\psi _{n}(x|a,b,t,q)$ is the c2h polynomials defined by (%
\ref{_c2h}).
\end{theorem}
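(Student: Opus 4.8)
The plan is to prove all three parts by one and the same mechanism. For a fixed lower family, I regard the left-hand side as a sequence in $n$ and show that it obeys, up to the normalising $q$-Pochhammer factor, exactly the three-term recurrence of the higher family appearing on the right; uniqueness of solutions of a second-order recurrence then forces the claimed equality once two initial values are matched. Concretely, I set $S_n=\sum_{i\geq 0}\frac{t^i}{(q)_i}h_{i+n}(x|q)$ for part (i), $T_n=\sum_{i\geq 0}\frac{t^i}{(q)_i}h_{i+n}(x|a,q)$ for part (ii), and $V_n=\sum_{i\geq 0}\frac{t^i}{(q)_i}Q_{i+n}(x|a,b,q)$ for part (iii). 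Each of these is an instance of $\sigma_n(t,q)$ from Proposition \ref{pomoc}, with $\xi_m$ the relevant polynomial.

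The engine of the computation is to insert the lower family's recurrence (\ref{_H}), (\ref{_bH}) or (\ref{_Q}) into the sum defining the $(n+1)$st term. Every factor of the shape $q^{i+n}$ produced this way splits as $q^n\cdot q^i$, and the $q^i$ is absorbed by the summation index, turning a part of the sum into $\sum_{i\geq 0}\frac{(tq)^i}{(q)_i}(\cdots)=\sigma_{\bullet}(tq,q)$; in part (iii) a cross term carries $q^{2i}$ and produces $\sigma_{\bullet}(tq^2,q)$. Proposition \ref{pomoc} is precisely the tool that converts these shifted-base series back into the original ones: its $m=1$ case gives $\sigma_n(tq,q)=\sigma_n(t,q)-t\,\sigma_{n+1}(t,q)$, and its $m=2$ case expresses $\sigma_{n-1}(tq^2,q)$ through $\sigma_{n-1}(t,q),\sigma_n(t,q),\sigma_{n+1}(t,q)$. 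Substituting these back and collecting the top term, I find in (ii) a factor $(1-atq^n)$ multiplying $T_{n+1}$ and in (iii) a factor $(1-atq^n)(1-btq^n)$ multiplying $V_{n+1}$, which are exactly the ratios $(at)_{n+1}/(at)_n$ and $(at,bt)_{n+1}/(at,bt)_n$ responsible for the denominators on the right of (\ref{_sASC}) and (\ref{_sc2h}).

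Matching to the target is then routine. For part (i) the recurrence obtained is precisely (\ref{_bH}) with $a$ replaced by $t$, namely $S_{n+1}=(2x-tq^n)S_n-(1-q^n)S_{n-1}$, while $S_0=h_0(x|t,q)S_0$ and a short computation gives $S_1=(2x-t)S_0=h_1(x|t,q)S_0$, so induction yields (\ref{_sbh}). For parts (ii) and (iii) I divide the higher family's recurrence (\ref{_Q}), resp. (\ref{_c2h}), by $(at)_{n+1}$, resp. $(at,bt)_{n+1}$, and use $(at)_{n+1}=(at)_n(1-atq^n)$ to check that the normalised polynomials $Q_n(x|a,t,q)/(at)_n$ and $\psi_n(x|a,b,t,q)/(at,bt)_n$ satisfy the very recurrences just derived for $T_n/T_0$ and $V_n/V_0$; verifying the two initial values $n=0,1$ again closes each induction.

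I expect the only genuine difficulty to lie in part (iii). The c2h coefficients $d_n$ and $f_{n-1}$ are cubic in the parameters, so inserting (\ref{_Q}) produces several shifted-base series, one of which (from the $abq^{2(i+n)-1}$ cross term) forces the $m=2$ instance of Proposition \ref{pomoc}, and the algebraic bookkeeping is correspondingly heavier. The decisive check is that after collecting terms the coefficient of $V_{n-1}$ reduces to $(1-q^n)(1-abq^{n-1})$; this agrees with the right side of (\ref{_sc2h}) precisely because two of the three factors of $f_{n-1}=(1-abq^{n-1})(1-atq^{n-1})(1-btq^{n-1})$ cancel against the $(n-1)$-shift of the denominator $(at,bt)_n$, leaving $(1-abq^{n-1})$. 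A secondary but necessary point throughout is to justify the term-by-term rearrangements and to guarantee the denominators never vanish; this is exactly where the hypotheses $t^2<1$ and $|at|,|bt|<1$ enter, securing absolute convergence and $(at)_n,(bt)_n\neq 0$.
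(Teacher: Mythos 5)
Your proposal is correct and is essentially the paper's own argument: show that the shifted generating function, after normalization by the appropriate $q$-Pochhammer factor, satisfies the three-term recurrence of the higher family --- using Proposition \ref{pomoc} to convert the shifted-base series back into the original ones --- and then close the induction by matching the values at $n=0,1$. The only differences are organizational: the paper carries out this computation just once, for part iii), and deduces i) and ii) by setting the extra parameters to zero (Remark \ref{zera}), and in the bookkeeping it absorbs factors $(1-q^{i})$ into $(q)_{i}$ by index shifts rather than invoking the $m=2$ case of Proposition \ref{pomoc}, though both devices yield the same recurrence (\ref{(***)}).
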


\begin{proof}
Is shifted to section \ref{dowody}.
\end{proof}

It turns out that similar properties can be attributed to some other,
classical orthogonal polynomials. In the case of Hermite and Laguerre
polynomials this was observed by Rainville in his book \cite{Rainville}. Let 
$\left\{ H_{n}\left( x\right) \right\} _{n\geq -1}$ and $\left\{ U_{n}\left(
x\right) \right\} _{n\geq -1}$ denote classical, orthogonal, polynomials
respectively Hermite and Chebyshev. Let us consider polynomials $\left\{
\lambda _{n}\left( x,\alpha \right) \right\} _{n\geq -1}$ that are monic
versions of Laguerre polynomials. More precisely 
\begin{equation*}
\lambda _{n}\left( x,\alpha \right) =(-1)^{n}n!L_{n}^{\left( \alpha \right)
}(x),
\end{equation*}%
where $L_{n}^{\left( \alpha \right) }(x)$ denote traditional Laguerre
polynomials e.g. defined by (1.11.1) of \cite{Koek}. The 3-term recurrences
of polynomials $\left\{ H_{n}\left( x\right) \right\} _{n\geq -1},$ $\left\{
\lambda _{n}\left( x,\alpha \right) \right\} _{n\geq -1}$ and $\left\{
U_{n}\left( x\right) \right\} _{n\geq -1}$ are given by formulae
respectively (1.13.4), (1.11.4) of \cite{Koek} and (\ref{_cheb}).

\begin{lemma}
\label{classical}For $\forall n\geq 0$, $x,t\in \mathbb{R}$ with $t\neq -1$
in the case of (\ref{f_L}) we have. 
\begin{eqnarray}
\sum_{j\geq 0}\frac{t^{j}}{j!}H_{n+j}\left( x\right) &=&H_{n}\left(
x-t\right) \sum_{j\geq 0}\frac{t^{j}}{j!}H_{j}\left( x\right) ,  \label{f_H}
\\
\sum_{j\geq 0}\frac{t^{j}}{j!}\lambda _{n+j}\left( x,\alpha \right) &=&\frac{%
\lambda _{n}\left( \frac{x}{1+t},\alpha \right) }{(1+t)^{n}}\sum_{j\geq 0}%
\frac{t^{j}}{j!}\lambda _{j}\left( x,\alpha \right) ,  \label{f_L} \\
\sum_{j\geq 0}t^{j}U_{n+j}\left( x\right) \allowbreak &=&\allowbreak
(U_{n}\left( x\right) -tU_{n-1}\left( x\right) )\sum_{j\geq
0}t^{j}U_{j}\left( x\right) .  \label{f_U}
\end{eqnarray}
\end{lemma}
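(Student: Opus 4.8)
The plan is to prove each of the three identities in Lemma \ref{classical} by the same strategy I expect underlies the $q$-analogue in Theorem \ref{basic}: recognize the left-hand sum as a ``shifted'' generating function, and exploit the explicit closed form of the ordinary (unshifted, $n=0$) generating function together with the three-term recurrence to peel off the polynomial factor. For the Hermite case (\ref{f_H}), I would start from the classical bilateral generating identity $\sum_{j\geq 0}\frac{t^{j}}{j!}H_{j}(x)=\exp(2xt-t^{2})$, which follows from the standard generating function $\sum_{j\geq 0}\frac{t^{j}}{j!}H_{j}(x)=\exp(2xt-t^{2})$ (with the physicists' normalization matching recurrence (1.13.4) of \cite{Koek}). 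The key observation is that $H_{n}(x-t)$ itself has an expansion in the $H_{j}(x)$, and more efficiently that the shifted sum $G_{n}(x,t):=\sum_{j\geq 0}\frac{t^{j}}{j!}H_{n+j}(x)$ satisfies a simple recurrence in $n$ inherited from (\ref{_H})-type recurrence for $H$.

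\medskip

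Concretely, I would proceed as follows. First I define $S(x,t)=\sum_{j\geq 0}\frac{t^{j}}{j!}H_{j}(x)$ and $G_{n}(x,t)=\sum_{j\geq 0}\frac{t^{j}}{j!}H_{n+j}(x)$, so that $G_{0}=S$. Using the Hermite recurrence $H_{m+1}(x)=2xH_{m}(x)-2mH_{m-1}(x)$ inside the sum defining $G_{n}$, and reindexing, I would derive a recurrence of the form $G_{n+1}(x,t)=(2x-\text{something in }t)G_{n}(x,t)-(\text{something})G_{n-1}(x,t)$; comparing this with the recurrence satisfied by the candidate factor $H_{n}(x-t)$ (which is just the Hermite recurrence in the shifted variable $x-t$) shows that $G_{n}/S$ and $H_{n}(x-t)$ obey the \emph{same} three-term recurrence in $n$ with the same initial data $G_{0}/S=1$ and the correct value at $n=1$. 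By uniqueness of solutions of a second-order recurrence given two initial values, this forces $G_{n}(x,t)=H_{n}(x-t)\,S(x,t)$, which is exactly (\ref{f_H}). The same template handles (\ref{f_U}): I would set $V(x,t)=\sum_{j\geq 0}t^{j}U_{j}(x)=\frac{1}{1-2xt+t^{2}}$ (the well-known generating function of Chebyshev polynomials of the second kind, valid for $|x|\leq 1$, $|t|<1$), define the shifted sum, use recurrence (\ref{_cheb}) to get a recurrence in $n$, and match it against the factor $U_{n}(x)-tU_{n-1}(x)$; here Lemma \ref{lin_komb} is directly applicable, since $U_{n}(x)-tU_{n-1}(x)$ is precisely a length-two linear combination of consecutive $U$'s with constant coefficients $\beta_{0}=1,\beta_{1}=-t$, so the shifted sum divided by $V$ inherits exactly that combination.

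\medskip

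The Laguerre identity (\ref{f_L}) is structurally the same but with a dilation rather than a translation in the argument, which is what I expect to be the main obstacle. The monic Laguerre polynomials $\lambda_{n}(x,\alpha)=(-1)^{n}n!L_{n}^{(\alpha)}(x)$ satisfy a three-term recurrence with $n$-dependent coefficients (from (1.11.4) of \cite{Koek}), and the relevant generating function $\sum_{j\geq 0}\frac{t^{j}}{j!}\lambda_{j}(x,\alpha)$ is, after the monic rescaling, of the form $(1+t)^{-\alpha-1}\exp\!\bigl(\frac{-xt}{1+t}\bigr)$ up to normalization constants that I would pin down carefully. The delicate point is that the candidate factor is $\lambda_{n}\!\bigl(\frac{x}{1+t},\alpha\bigr)/(1+t)^{n}$, so the substitution $x\mapsto x/(1+t)$ together with the $(1+t)^{-n}$ scaling must be shown to intertwine correctly with the $n$-dependent recurrence coefficients. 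I would verify this by computing the recurrence satisfied by $\lambda_{n}(x/(1+t),\alpha)/(1+t)^{n}$ and checking it coincides term-by-term with the recurrence satisfied by $\sum_{j\geq 0}\frac{t^{j}}{j!}\lambda_{n+j}(x,\alpha)$ divided by the unshifted sum; the bookkeeping of the $\alpha$-dependent and $n$-dependent coefficients is where the calculation is genuinely laborious, but no new idea beyond the recurrence-matching and uniqueness argument is required. The restriction $t\neq -1$ appears precisely because the factor $(1+t)^{-n}$ and the argument $x/(1+t)$ degenerate there.
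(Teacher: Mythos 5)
Your proposal is correct, but it is not the paper's route for two of the three identities. The paper only proves (\ref{f_U}) itself --- exactly as you do, by showing the shifted sum $\nu _{n}\left( x,t\right) =\sum_{j\geq 0}t^{j}U_{n+j}\left( x\right) $ satisfies $2x\nu _{n}=\nu _{n+1}+\nu _{n-1}$ with $\nu _{1}=(U_{1}-tU_{0})\nu _{0}$ and then invoking Lemma \ref{lin_komb} --- whereas for (\ref{f_H}) and (\ref{f_L}) it simply cites Rainville (p.~197 eq.~(1), and p.~211 eq.~(9) after $t\mapsto -t$) and gives no argument. Your recurrence-matching scheme genuinely proves all three statements uniformly and self-containedly, and the computations do close: for Hermite one gets $G_{n+1}=2(x-t)G_{n}-2nG_{n-1}$, i.e.\ the Hermite recurrence in the shifted variable $x-t$; for monic Laguerre, whose recurrence is $\lambda _{n+1}\left( x,\alpha \right) =(x-(2n+\alpha +1))\lambda _{n}\left( x,\alpha \right) -n(n+\alpha )\lambda _{n-1}\left( x,\alpha \right) $, the shifted sum $\Lambda _{n}$ satisfies $(1+t)^{2}\Lambda _{n+1}=\bigl(x-(2n+\alpha +1)(1+t)\bigr)\Lambda _{n}-n(n+\alpha )\Lambda _{n-1}$, which is precisely the recurrence of $\lambda _{n}\bigl(x/(1+t),\alpha \bigr)/(1+t)^{n}$; in both cases the initial data match automatically because the coefficient of the $(n-1)$-st term vanishes at $n=0$. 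So your approach buys self-containedness and a single template (shifted-sum recurrence plus uniqueness of solutions of a second-order recurrence), at the cost of the Laguerre bookkeeping that the paper avoids by citation; your Chebyshev argument coincides with the paper's. Two small blemishes, neither fatal: the closed-form generating functions you quote are never actually used in your argument (and your monic-Laguerre one has a sign slip --- it should be $(1+t)^{-\alpha -1}\exp \left( xt/(1+t)\right) $), and the statement's blanket ``$x,t\in \mathbb{R}$'' requires $|t|$ small enough for convergence in the Chebyshev case when $|x|>1$, a point neither you nor the paper addresses.
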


\begin{proof}
(\ref{f_H}) is proved in \cite{Rainville} p. 197 eq. (1), (\ref{f_L}) is
proved in \cite{Rainville} p. 211 eq (9) with an obvious modification such
at the change of $t$ to $-t.$ Proof of (\ref{f_U}) is the following. Let $%
\nu _{n}\left( x,t\right) \allowbreak =\allowbreak \sum_{j\geq
0}t^{j}U_{n+j}\left( x\right) $. We have 
\begin{equation*}
2x\nu _{n}\left( x,t\right) \allowbreak =\allowbreak \sum_{j\geq
0}t^{j}(U_{n+j+1}\left( x\right) +U_{n+j-1}\left( x\right) )\allowbreak
=\allowbreak \nu _{n+1}\left( x,t\right) +\nu _{n-1}\left( x,t\right) ,
\end{equation*}%
with 
\begin{eqnarray*}
\nu _{1}\left( x,t\right) \allowbreak &=&\allowbreak \sum_{j\geq
0}t^{j}U_{1+j}\left( x\right) \allowbreak =\allowbreak \sum_{j\geq
0}t^{j}(2xU_{j}\left( x\right) -U_{j-1}\left( x\right) )\allowbreak
=\allowbreak 2x\nu _{0}\left( x,t\right) -t\nu _{0}\left( x,t\right)
\allowbreak \\
&=&\allowbreak (2x-t)\nu _{0}\left( x,t\right) \allowbreak =\allowbreak
(U_{1}\left( x\right) -tU_{0}\left( x\right) )\nu _{0}\left( x,t\right) .
\end{eqnarray*}

Hence by Proposition \ref{lin_komb} we have: $\nu _{n}\left( x,t\right) /\nu
_{0}\left( x,t\right) \allowbreak =\allowbreak (U_{n}\left( x\right)
-tU_{n-1}\left( x\right) ).$
\end{proof}

\section{Open problems and comments\label{open}}

\begin{remark}
\label{diff}Recalling $q-$differentiation formula (see e.g. \cite{IA}%
(11.4.1) on page 296) $D_{q,x}f\left( x\right) =\frac{f(qx)-f(x)}{x(q-1)}$
we see that 
\begin{equation}
D_{q,\rho }(\frac{\rho ^{n}}{\left( q\right) _{n}})\allowbreak =\allowbreak 
\frac{\rho ^{n-1}}{(1-q)\left( q\right) _{n-1}}.  \label{rozn}
\end{equation}%
Let us define $n-$fold composition of the operator $D.$ 
\begin{equation*}
D_{q,x}^{n}f(x)\allowbreak =\allowbreak D_{q,x}(D_{q,x}(...D_{q,x}f(x))).
\end{equation*}%
Hence we deduce that expressions of the form $\varphi _{n}\left(
x|t,q\right) \allowbreak =\allowbreak \sum_{i\geq 0}\frac{t^{i}}{\left(
q\right) _{i}}p_{i+n}\left( x|q\right) $ considered in the first three
assertions of Theorem \ref{basic} are in fact following (\ref{rozn})
proportional to $q-$derivatives of $\varphi _{0}\left( x|t,q\right) $ with
respect to $t.$ Hence those assertions can be expressed in the following `$%
q- $Rodrigues'-like form:%
\begin{eqnarray*}
D_{q,t}^{n}\left( \sum_{i\geq 0}\frac{t^{i}}{\left( q\right) _{i}}%
h_{i}\left( x|q\right) \right) \allowbreak &=&\allowbreak
(1-q)^{n}h_{n}\left( x|t,q\right) \sum_{i\geq 0}\frac{t^{i}}{\left( q\right)
_{i}}h_{i}\left( x|q\right) , \\
D_{q,t}^{n}\left( \sum_{i\geq 0}\frac{t^{i}}{\left( q\right) _{i}}%
h_{i}\left( x|a,q\right) \right) \allowbreak &=&\allowbreak \frac{%
(1-q)^{n}Q_{n}\left( x|a,t,q\right) }{\left( at\right) _{n}}\sum_{i\geq 0}%
\frac{t^{i}}{\left( q\right) _{i}}h_{i}\left( x|a,q\right) , \\
D_{q,t}^{n}\left( \sum_{i\geq 0}\frac{t^{i}}{\left( q\right) _{i}}%
Q_{i}\left( x|a,b,q\right) \right) \allowbreak &=&\allowbreak \frac{%
(1-q)^{n}\psi _{n}\left( x|a,b,t,q\right) }{\left( at,bt\right) _{n}}%
\sum_{i\geq 0}\frac{t^{i}}{\left( q\right) _{i}}Q_{i}\left( x|a,b,q\right) .
\end{eqnarray*}
\end{remark}

Thus it is natural to pose the following hypothesis.

\begin{conjecture}
\label{first}%
\begin{equation*}
D_{q,t}^{n}\left( \sum_{i\geq 0}\frac{t^{i}}{(q)_{i}}\psi _{i}\left(
x|a,b,c,q\right) \right) \allowbreak =\allowbreak \frac{(1-q)^{n}AW_{n}%
\left( x|a,b,c,t,q\right) }{(at,bt,ct)_{n}}\sum_{i\geq 0}\frac{t^{i}}{(q)_{i}%
}\psi _{i}\left( x|a,b,c,q\right) ,
\end{equation*}%
where $AW_{n}$ denotes $n-$th. Askey--Wilson polynomial as defined in (\ref%
{AW}).
\end{conjecture}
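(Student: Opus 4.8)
The plan is to work with the equivalent ``scheme'' form of the identity rather than the $q$-Rodrigues form: by the relation between $q$-differentiation and the shift $n\mapsto n+i$ recorded in Remark \ref{diff}, the conjecture is equivalent to the following continuation of the scheme of Theorem \ref{basic},
\[
\sum_{i\geq 0}\frac{t^{i}}{(q)_{i}}\psi _{i+n}\left( x|a,b,c,q\right)=\frac{AW_{n}\left( x|a,b,c,t,q\right)}{\left( at,bt,ct\right) _{n}}\sum_{i\geq 0}\frac{t^{i}}{(q)_{i}}\psi _{i}\left( x|a,b,c,q\right).
\]
Write $\varphi _{n}\left( x|t,q\right)=\sum_{i\geq 0}\frac{t^{i}}{(q)_{i}}\psi _{i+n}\left( x|a,b,c,q\right)$. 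Since $\varphi _{0}$ is independent of $n$, it suffices to show that $R_{n}:=\left( at,bt,ct\right) _{n}\,\varphi _{n}/\varphi _{0}$ coincides with $AW_{n}\left( x|a,b,c,t,q\right)$; I would do this by verifying that $R_{n}$ obeys the same three-term recurrence and the initial values $R_{0}=1$, $R_{1}=AW_{1}$ as the Askey--Wilson family in the normalization of (\ref{AW}).

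First I would substitute the c2h recurrence (\ref{_c2h}) for $\psi _{i+n+1}$ inside the sum defining $\varphi _{n+1}$. The obstruction is that $d_{i+n}$ and $f_{i+n-1}(1-q^{i+n})$ depend on $i$ only through powers $q^{j(i+n)}$, and these are eliminated by Proposition \ref{pomoc}: because $\sum_{i\geq 0}\frac{t^{i}}{(q)_{i}}q^{m(i+n)}\psi _{i+n+r}=q^{mn}\varphi _{n+r}(x|q^{m}t,q)$ and $\varphi _{\ell }(x|q^{m}t,q)=\sum_{k=0}^{m}(-1)^{k}\QATOPD[ ]{m}{k}_{q}q^{\binom{k}{2}}t^{k}\varphi _{\ell +k}(x|t,q)$, every such weighted sum becomes a finite, $t$-polynomial combination of shifted series $\varphi _{n-1},\varphi _{n},\varphi _{n+1},\dots$. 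Collecting terms turns the recurrence into a linear relation among the $\varphi $'s, and, mirroring the proofs of Theorem \ref{basic}(ii),(iii), where the coefficient of $\varphi _{n+1}$ factors respectively as $(1-aq^{n}t)$ and $(1-aq^{n}t)(1-bq^{n}t)$, I expect the coefficient of $\varphi _{n+1}$ to factor here as $(1-aq^{n}t)(1-bq^{n}t)(1-cq^{n}t)=\left( at,bt,ct\right) _{n+1}/\left( at,bt,ct\right) _{n}$. This factorization is exactly what produces the denominator $\left( at,bt,ct\right) _{n}$ and lets the relation pass to a recurrence for $R_{n}$.

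The main obstacle is what distinguishes this rung from the earlier ones. The two extra factors in (\ref{_c2h}) push the total degree in $q^{i+n}$ up to $q^{2(i+n)}$ inside $d_{i+n}$ and up to $q^{4(i+n)}$ inside $f_{i+n-1}(1-q^{i+n})$, so the expansion of the previous paragraph a priori produces $\varphi _{n+2}$ and $\varphi _{n+3}$ as well. For $R_{n}$ to satisfy a genuine three-term recurrence these higher shifts must cancel, and the surviving coefficients of $\varphi _{n}$ and $\varphi _{n-1}$ must reproduce the Askey--Wilson recurrence coefficients, which are rational in $q^{n}$ with denominators arising from the factors $(abctq^{n+i-1})_{n-i}$ in (\ref{AW}). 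Establishing this cancellation and the ensuing match for general $q$ is the heart of the difficulty and is precisely what keeps the statement conjectural.

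For $q=0$ the whole computation trivializes: by Proposition \ref{q->0}(iii),(iv) both $\psi _{i}(x|a,b,c,0)$ and $AW_{n}(x|a,b,c,t,0)$ are explicit finite linear combinations of the Chebyshev polynomials $U_{m}$, and $(q)_{i}=1$, so every series becomes an ordinary power series in $t$; collapsing the shifted sums by the Chebyshev generating identity (\ref{f_U}) (equivalently by Lemma \ref{lin_komb}) reduces the claim to a finite polynomial identity in $a,b,c,t$ among Chebyshev coefficients, which can be checked directly. For $q=1$ all powers $q^{j(i+n)}$ equal $1$, so the coefficients $d_{i+n}$ and $f_{i+n-1}$ become independent of $i$; after the rescaling of Remark \ref{intuition} the families $\psi $ and $AW$ degenerate to classical orthogonal polynomials and the series to their classical generating functions, whereupon the identity follows from the classical shifted-generating-function identities of the type established in Lemma \ref{classical}. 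I therefore expect the general case to require either a new identity controlling the $\varphi _{n+2},\varphi _{n+3}$ contributions or a direct connection-coefficient computation exploiting the symmetry of $\psi $ and $AW$ in their parameters together with (\ref{AW}) and (\ref{psinaQ}).
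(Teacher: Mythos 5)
Your proposal matches the paper's own treatment of this statement: it really is left as a conjecture there (no proof for general $q$ is given), and your diagnosis of the obstruction --- the weights $q^{2(i+n)}$ and $q^{4(i+n)}$ in the c2h recurrence producing the higher shifts $\varphi_{n+2},\varphi_{n+3}$ via Proposition \ref{pomoc}, together with the rational-in-$q^{n}$ Askey--Wilson recurrence coefficients --- is consistent with why the paper leaves it open. Your sketches of the two special cases also coincide with the paper's arguments: the $q=0$ case (Proposition \ref{Con dla q=0}) is proved there exactly as you describe, by expanding $\psi_{j+n}(x|a,b,c,0)$ and $AW_{n}(x|a,b,c,t,0)$ in Chebyshev polynomials via Proposition \ref{q->0} and collapsing the shifted sums with (\ref{f_U}), while the $q=1$ case is handled in a remark by rescaling all families to probabilistic Hermite polynomials (so that every generating function reduces to (\ref{f_H})).
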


\begin{remark}
Let us notice that the above mentioned conjecture is almost trivially true
for $q\allowbreak =\allowbreak 1$ in view of (\ref{f_H}) with an obvious
modification that all polynomial families considered are modified in the
following way.\ Instead of polynomials $h_{n}$ we consider polynomials $%
\allowbreak H_{n}\left( x|q\right) =\allowbreak h_{n}\left( x\frac{\sqrt{1-q}%
}{2}|q\right) /(1-q)^{n/2}$. Instead of polynomials $h_{n}\left(
x|a,q\right) $ we consider polynomials $H_{n}\left( x|a,q\right) \allowbreak
=\allowbreak h_{n}\left( x\frac{\sqrt{1-q}}{2}|a\sqrt{1-a},q\right)
/(1-q)^{n/2}.$ Instead of polynomials $Q_{n}\left( x|a,b,q\right) $ we
consider polynomials $P_{n}\left( x|a,b,q\right) \allowbreak =\allowbreak
Q_{n}\left( x\frac{\sqrt{1-q}}{2}|a\sqrt{1-a},b\sqrt{1-q},q\right)
/(1-q)^{n/2}$. Instead of polynomials $\psi _{n}\left( x|a,b,c,q\right) $ we
consider polynomials $G_{n}\left( x|a,b,c,q\right) \allowbreak =\allowbreak
\psi _{n}\left( x\frac{\sqrt{1-q}}{2}|a\sqrt{1-a},b\sqrt{1-q},c\sqrt{1-q}%
,q\right) /(1-q)^{n/2}$. Finally instead of polynomials $AW_{n}\left(
x|a,b,c,d,q\right) $ we consider polynomials \newline
$a_{n}\left( x|a,b,c,d,q\right) \allowbreak =\allowbreak AW_{n}\left( x\frac{%
\sqrt{1-q}}{2}|a\sqrt{1-a},b\sqrt{1-q},c\sqrt{1-q},d\sqrt{1-q},q\right)
/(1-q)^{n/2}$ while all generating functions involved are defined as the sum 
$\sum_{i\geq 0}\frac{t^{i}}{\left[ i\right] _{q}!}p_{n+i}\left( x\right) $
where instead of $p_{n}$ we put $H_{n}\left( x|q\right) $ or $H_{n}\left(
x|a,q\right) $ or $P_{n}\left( x|a,b,q\right) $ or $G_{n}\left(
x|a,b,a,q\right) .$ Then for $q\allowbreak =\allowbreak 1$ we have $%
H_{n}\left( x|1\right) \allowbreak =\allowbreak He_{n}\left( x\right) ,$ $%
H_{n}\left( x|a,1\right) \allowbreak =\allowbreak He_{n}\left( x-a\right) ,$ 
$P_{n}\left( x|a,b,1\right) \allowbreak =\allowbreak He_{n}\left(
x-a-b\right) ,$ $G_{n}\left( x|a,b,c,1\right) \allowbreak =\allowbreak
He_{n}\left( x-a-b-c\right) ,$ \newline
$a_{n}\left( x|a,b,a,d,1\right) \allowbreak =\allowbreak He_{n}\left(
x-a-b-c-d\right) ,$ where $He_{n}\left( x\right) $ denotes, as before, the
probabilistic Hermite polynomial as described in Remark \ref{intuition}.
\end{remark}

\begin{proposition}
\label{Con dla q=0}Conjecture \ref{first} is true for $q\allowbreak
=\allowbreak 0.$
\end{proposition}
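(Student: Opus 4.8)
The plan is to verify Conjecture \ref{first} at $q=0$ by computing both sides explicitly using the $q=0$ specializations that are already available in the excerpt. At $q=0$ the $q$-derivative $D_{q,t}$ degenerates into an ordinary divided-difference-type operator, and in fact formula (\ref{rozn}) shows that applying $D_{q,t}^{n}$ to the generating function $\sum_{i\geq 0}\frac{t^{i}}{(q)_{i}}\psi_{i}(x|a,b,c,q)$ simply re-indexes the series as $(1-q)^{-n}\sum_{i\geq 0}\frac{t^{i}}{(q)_{i}}\psi_{i+n}(x|a,b,c,q)$. Thus, just as in Remark \ref{diff}, the conjectured identity at $q=0$ is equivalent to the ``unshifted'' statement
\begin{equation*}
\sum_{i\geq 0}t^{i}\,\psi_{i+n}(x|a,b,c,0)\allowbreak =\allowbreak \frac{AW_{n}(x|a,b,c,t,0)}{(at,bt,ct)_{n}}\sum_{i\geq 0}t^{i}\,\psi_{i}(x|a,b,c,0),
\end{equation*}
where I have used $(q)_{i}=1$ at $q=0$. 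So the first step is to reduce the $D_{q,t}^{n}$ form to this clean generating-function identity.

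Next I would substitute the explicit $q=0$ evaluations from Proposition \ref{q->0}(iii)--(iv). These express $\psi_{i}(x|a,b,c,0)$ as a fixed finite linear combination of four consecutive Chebyshev polynomials $U_{i},U_{i-1},U_{i-2},U_{i-3}$ with coefficients the elementary symmetric functions $e_{0},e_{1},e_{2},e_{3}$ of $(a,b,c)$, and $AW_{n}(x|a,b,c,t,0)$ as a combination of $U_{n},\dots,U_{n-4}$ with coefficients the elementary symmetric functions of $(a,b,c,t)$. The point is that the left-hand side $\sum_{i\geq 0}t^{i}\psi_{i+n}(x|a,b,c,0)$ becomes a linear combination of the shifted Chebyshev sums $\nu_{m}(x,t)=\sum_{j\geq 0}t^{j}U_{n+m+j}(x)$, and by equation (\ref{f_U}) each such sum satisfies $\nu_{m}(x,t)/\nu_{0}(x,t)=U_{n+m}(x)-tU_{n+m-1}(x)$ (equivalently I can use Lemma \ref{lin_komb} directly, since the $q=0$ specializations are honest Chebyshev combinations obeying the Chebyshev recurrence for large index). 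I would also need the $q=0$ value of the Pochhammer factor, namely $(at,bt,ct)_{n}=(1-at)(1-bt)(1-ct)$ for $n\geq 1$ (and $1$ for $n=0$), from the stated rule $(a;0)_{n}=1-a$ for $n\geq 1$.

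Concretely, then, I would factor out the common generating function $\sum_{i\geq 0}t^{i}\psi_{i}(x|a,b,c,0)$ from both sides — on the left it equals $\left(\sum_j t^j U_j\right)$ times the fixed symmetric-function combination evaluated at index $0$, and on the right it appears literally — reducing the claim to the purely polynomial identity that the index-$n$ ``leading coefficient vector'' of the left side matches $AW_{n}(x|a,b,c,t,0)/\big((1-at)(1-bt)(1-ct)\big)$. Carrying out the Chebyshev bookkeeping, the left side produces $\sum_{m\geq 0}t^{m}$-weighted terms that telescope via the $U_{k}-tU_{k-1}$ factors; collecting powers of $t$ and comparing the coefficients of each $U_{n-r}(x)$ should reproduce exactly the elementary symmetric polynomials $e_{r}(a,b,c,t)$ appearing in Proposition \ref{q->0}(iv), once the denominator $(1-at)(1-bt)(1-ct)$ is cleared.

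I expect the main obstacle to be the algebraic verification that clearing the denominator $(1-at)(1-bt)(1-ct)$ against the telescoped Chebyshev expansion of the left-hand side yields precisely the symmetric-function coefficients $e_{r}(a,b,c,t)$ of the $q=0$ Askey--Wilson polynomial. In other words, the crux is the symmetric-function identity relating $e_{r}(a,b,c,t)$ to products of $e_{s}(a,b,c)$ with the geometric expansions of $(1-at)(1-bt)(1-ct)$ in powers of $t$; this is where the computation is laborious rather than conceptually hard, and it is the step I would organize most carefully, proceeding by comparing coefficients of $U_{n-r}(x)$ for $r=0,1,2,3,4$ separately. The remaining inputs — the reduction via Remark \ref{diff}, the Chebyshev summation (\ref{f_U}) or Lemma \ref{lin_komb}, and the $q=0$ data of Proposition \ref{q->0} — are all supplied by the excerpt and require no further justification.
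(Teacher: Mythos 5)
Your proposal is correct and follows essentially the same route as the paper's own proof: reduce the $D_{q,t}^{n}$ statement to the shifted generating-function identity via Remark \ref{diff}, expand $\psi_{i}(x|a,b,c,0)$ and $AW_{n}(x|a,b,c,t,0)$ in Chebyshev polynomials via Proposition \ref{q->0} (iii)--(iv), factor each shifted Chebyshev sum as $\alpha_{0}(x,t)\bigl(U_{k}(x)-tU_{k-1}(x)\bigr)$ using (\ref{f_U}), and match coefficients. The only remark worth adding is that the final bookkeeping you flag as the laborious crux is lighter than you anticipate: the factor $(1-at)(1-bt)(1-ct)$ arises exactly as $\chi_{0}/\alpha_{0}$ and cancels outright, so no geometric expansions are needed and the coefficient comparison reduces to the one-line identity $e_{r}(a,b,c,t)=e_{r}(a,b,c)+t\,e_{r-1}(a,b,c)$.
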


\begin{proof}
Proof is shifted to Section \ref{dowody}.
\end{proof}

\begin{remark}
A successful attempt to get different polynomials from the Askey--Wilson
scheme was made in \cite{At-At10}. It was done through modification of the
so called Askey--Wilson divided $q-$difference operator, a complicated $q-$%
differentiation scheme, applied straightforwardly to polynomials themselves
as well as to the densities of measures that make these polynomials
orthogonal. Our differentiation scheme presented in the Remark \ref{diff}
involves much simpler $q-$difference operator applied not directly to the
polynomials involved but to the characteristic functions of these
polynomials.
\end{remark}

\begin{problem}[Open Problem]
Continuing the line of generalizations presented in Theorem \ref{basic} and
Conjecture \ref{first} one can pose the following question. Is it true that:%
\begin{equation*}
D_{q,t}^{n}\left( \sum_{i\geq 0}\frac{t^{i}}{(q)_{i}}AW_{i}\left(
x|a,b,c,d,q\right) \right) =GAW_{n}\left( x|a,b,c,d,t,q\right) \sum_{i\geq 0}%
\frac{t^{i}}{(q)_{i}}AW_{i}\left( x|a,b,c,d,q\right) ,
\end{equation*}%
where $\left\{ GAW_{n}\left( x|a,b,c,d,t,q\right) \right\} _{n\geq -1}$
constitute a family of orthogonal (i.e. satisfying some 3-term recurrence)
polynomials in $x.$

If it was true then naturally polynomials $GAW_{n}$ could be regarded as
generalization of the AW polynomials. Then of course one could pose many
further questions concerning properties of these polynomials. One of such
questions could be to compare this attempt to generalize AW polynomials with
another one presented in \cite{Szabl-intAW}.
\end{problem}

\section{Proofs\label{dowody}}

\begin{proof}[Proof of Proposition \protect\ref{q->0}]
Assertions i) ii) iii) and assertion iv) for complex parameters were
mentioned already in \cite{Szablowski2010(1)}, however in view of Lemma \ref%
{lin_komb} the proof can be reduced to the following two arguments. First of
all notice that examining equations (\ref{_H}), (\ref{_bH}), (\ref{_Q}), (%
\ref{_c2h}) and Proposition 2 of \cite{Szab-bAW} that for $n\geq 2$ for the
first three assertions and for $n\geq 3$ for the AW polynomials for $%
q\allowbreak =\allowbreak 0$ the 3-term recurrences satisfied by all
mentioned in the proposition polynomials are in fact identical with (\ref%
{_cheb}). Now it remains to check (by direct computation done for example
with the help of package Mathematica) that indeed for $n\allowbreak
=\allowbreak 0,1,2,3$ all polynomials in question have the form mentioned in
Proposition \ref{q->0}. Another justification of all assertion follow
formulae (\ref{anah}), (\ref{hnaha}), (\ref{psinaQ}) and (\ref{AW}) from
which it follows for $q\allowbreak =\allowbreak 0$ that for $n\geq 1:$ $%
AW_{n}(x|a,b,c,d,0)\allowbreak =\allowbreak \psi _{n}\left( x|b,c,d,0\right)
-a\psi _{n-1}\left( x|b,c,d,0\right) $ and similarly for other polynomials
considered. One has to be cautious with the case $n\allowbreak =\allowbreak
1 $ and AW polynomials. One can easily check that for $n\allowbreak
=\allowbreak 1$ assertion iv) is not true.
\end{proof}

\begin{proof}[Proof of Proposition \protect\ref{pomoc}]
First we will prove that%
\begin{equation}
\sigma _{n}\left( \rho q^{m},q\right) \allowbreak =\allowbreak \sigma
_{n}\left( \rho q^{m-1},q\right) -\rho q^{m-1}\sigma _{n+1}\left( \rho
q^{m-1},q\right) .  \label{sigman}
\end{equation}%
We have: 
\begin{eqnarray*}
\sigma _{n}\allowbreak \left( \rho q^{m},q\right)  &=&\allowbreak
\sum_{i\geq 0}\frac{q^{mi}\rho ^{i}}{\left( q\right) _{i}}\xi
_{n+i}\allowbreak =\allowbreak \sum_{i\geq 0}\frac{q^{(m-1)i}\rho ^{i}}{%
\left( q\right) _{i}}\xi _{n+i}-\sum_{i\geq 0}\frac{q^{(m-1)i}(1-q^{i})\rho
^{i}}{\left( q\right) _{i}}\xi _{n+i} \\
&=&\sigma _{n}\left( \rho q^{m-1},q\right) -\rho q^{m-1}\sum_{j\geq 0}\frac{%
q^{(m-1)j}t^{j}}{\left( q\right) _{j}}\xi _{n+1+j}.
\end{eqnarray*}%
Then we prove (\ref{sigmanm}) by induction with respect to $m.$ We see that
it is true for $m\allowbreak =\allowbreak 1.$ Hence let us assume that it is
true for $m\leq k.$ Let us consider $m\allowbreak =\allowbreak k+1.$ We
have: 
\begin{eqnarray*}
&&\sigma _{n}\left( \rho q^{k+1},q\right) =\sigma _{n}\left( \rho
q^{k},q\right) -\rho q^{k}\sigma _{n+1}\left( \rho q^{k},q\right)  \\
&=&\sum_{j=0}^{k}(-1)^{j}\QATOPD[ ] {k}{j}_{q}q^{\binom{j}{2}}\rho
^{j}\sigma _{n+j}\left( \rho ,q\right) -\rho q^{k}\sum_{j=0}^{k}(-1)^{j}%
\QATOPD[ ] {k}{j}_{q}q^{\binom{j}{2}}\rho ^{j}\sigma _{n+1+j}\left( \rho
,q\right)  \\
&=&\sigma _{n}\left( \rho ,q\right) +\left( -1\right) ^{k+1}\rho ^{k+1}q^{%
\binom{k+1}{2}}\sigma _{n+k+1}\left( \rho ,q\right) +\sum_{j=1}^{k}(-1)^{j}%
\QATOPD[ ] {k}{j}_{q}q^{\binom{j}{2}}\rho ^{j}\sigma _{n+j}\left( \rho
,q\right)  \\
&&+\sum_{j=0}^{k-1}\left( -1\right) ^{j+1}\QATOPD[ ] {k}{j}_{q}q^{k+\binom{%
j+1}{2}-j}(1-q)^{j+1}\rho ^{j+1}\sigma _{n+j+1}\left( \rho ,q\right) .
\end{eqnarray*}%
Now we change the index of summation from $j=s-1.$ and get:%
\begin{eqnarray*}
&&\sigma _{n}\left( \rho q^{k},q\right) -\rho q^{k}\sigma _{n+1}\left( \rho
q^{k},q\right)  \\
&=&\sigma _{n}\left( \rho ,q\right) +\left( -1\right) ^{k+1}\rho ^{k+1}q^{%
\binom{k+1}{2}}\sigma _{n+k+1}\left( \rho ,q\right) +\sum_{j=1}^{k}\left(
-1\right) ^{j}(\QATOPD[ ] {k}{j}_{q} \\
&&+q^{k-j+1}\QATOPD[ ] {k}{j-1}_{q})q^{\binom{j}{2}}\rho ^{j}\sigma
_{n+j}\left( \rho ,q\right) .
\end{eqnarray*}%
since $\binom{j-1}{2}\allowbreak +\allowbreak j-1\allowbreak =\allowbreak 
\binom{j}{2}.$ Now we use the fact that $\QATOPD[ ] {k}{j}_{q}+q^{k-j+1}%
\QATOPD[ ] {k}{j-1}_{q}\allowbreak =\allowbreak \QATOPD[ ] {k+1}{j}_{q}$.
Hence we see that (\ref{sigmanm}) is true.
\end{proof}

\begin{proof}[Proof of Teorem \protect\ref{basic}]
Notice that i) and ii) follow iii) by Remark \ref{zera}. Thus it is enough
to prove iii).

iii) Let us denote $\chi _{n}\left( x|a,b,t,q\right) \allowbreak
=\allowbreak \sum_{i\geq 0}\frac{t^{i}}{\left( q\right) _{i}}Q_{i+n}\left(
x|a,b,q\right) .$ We are using (\ref{sigmanm}) on the way. 
\begin{gather*}
2x\chi _{n}\left( x|a,b,t,q\right) \allowbreak =\sum_{i\geq 0}\frac{t^{i}}{%
\left( q\right) _{i}}(2x-(a+b)q^{n+i}+(a+b)q^{n+i})Q_{n+i}\left(
x|a,b,q\right) \allowbreak  \\
=(a+b)q^{n}\chi _{n}\left( x|a,b,tq,q\right) \allowbreak +\allowbreak
\sum_{i\geq 0}\frac{t^{i}}{\left( q\right) _{i}}(Q_{n+1+i}\left(
x|a,b,q\right) \allowbreak  \\
+\allowbreak (1-abq^{n+i-1})(1-q^{n+i})Q_{n+i-1}\left( x|a,b,q\right) ) \\
=(a+b)q^{n}\chi _{n}\left( x|a,b,t,q\right) \allowbreak -\allowbreak
(a+b)q^{n}t\chi _{n+1}\left( x|a,b,t,q\right) \allowbreak +\allowbreak \chi
_{n+1}\left( x|a,b,t,q\right) \allowbreak + \\
\allowbreak (1-q^{n})\sum_{i\geq 0}\frac{t^{i}}{\left( q\right) _{i}}%
(1-abq^{n+i-1})Q_{n+i-1}\left( x|a,b,q\right) \allowbreak  \\
+\allowbreak q^{n}\sum_{i\geq 0}\frac{t^{i}}{\left( q\right) _{i}}\left(
1-q^{i}\right) (1-abq^{n+i-1})Q_{n+i-1}\left( x|a,b,q\right) .\allowbreak
\allowbreak \allowbreak 
\end{gather*}%
Further we have%
\begin{gather*}
2x\chi _{n}\left( x|a,b,t,q\right) \allowbreak =(a+b)q^{n}\chi _{n}\left(
x|a,b,q,q\right) \allowbreak +\allowbreak (1-(a+b)q^{n}t)\chi _{n+1}\left(
x|a,b,t,q\right) \allowbreak  \\
+\allowbreak (1-q^{n})(1-abq^{n-1})\chi _{n-1}\left( x|a,b,t,q\right)
\allowbreak  \\
+\allowbreak (1-q^{n})abq^{n-1}\sum_{i\geq 0}\frac{t^{i}}{\left( q\right)
_{i}}(1-q^{i})Q_{n+i-1}\left( x|a,b,q\right) \allowbreak  \\
+\allowbreak q^{n}(1-abq^{n})\sum_{i\geq 0}\frac{t^{i}}{(q)_{i}}%
(1-q^{i})Q_{n+i-1}\left( x|a,b,q\right) \allowbreak  \\
+\allowbreak q^{n}abq^{n}\sum_{i\geq 0}\frac{t^{i}}{(q)_{i}}%
(1-q^{i})(1-q^{i-1})Q_{n+i-1}\left( x|a,b,q\right) \allowbreak  \\
=\chi _{n+1}\left( x|a,b,t,q\right) (1-(a+b)q^{n}t+q^{2n}abt^{2})\allowbreak
+ \\
\chi _{n}\left( x|a,b,t,q\right)
((a+b)q^{n}+(1-q^{n})abq^{n-1}t+q^{n}(1-abq^{n})t)+ \\
\allowbreak (1-q^{n})(1-abq^{n-1})\chi _{n-1}\left( x|a,b,t,q\right) .
\end{gather*}%
Hence we have the following equation:%
\begin{eqnarray}
&&(2x-(a+b+t)q^{n}-abtq^{n-1}(1-q^{n}-q^{n+1}))\chi _{n}\left(
x|a,b,t,q\right) \allowbreak \allowbreak   \label{(***)} \\
&=&(1-(a+b)q^{n}t+q^{2n}abt^{2})\chi _{n+1}\left( x|a,b,t,q\right)
\allowbreak   \notag \\
&&+\allowbreak (1-q^{n})(1-abq^{n-1})\chi _{n-1}\left( x|a,b,t,q\right) . 
\notag
\end{eqnarray}
Notice that $1-(a+b)q^{n}t+q^{2n}abt^{2})\allowbreak =\allowbreak \left(
1-atq^{n}\right) \left( 1-btq^{n}\right) .$

Let $\hat{\chi}_{n}\left( x|a,b,t,q\right) \allowbreak =\allowbreak \chi
_{n}\left( x|a,b,t,q\right) \left( at,bt\right) _{n}.$ We have then after
multiplying both sides of (\ref{(***)}) by $\left( at,bt\right) _{n}$ 
\begin{eqnarray}
&&\hat{\chi}_{n}\left( x|a,b,t,q\right)
(2x-(a+b+t)q^{n}-abtq^{n-1}(1-q^{n}-q^{n+1}))\allowbreak   \label{3tr_chi} \\
&=&\allowbreak \hat{\chi}_{n+1}\left( x|a,b,t,q\right)
+(1-q^{n})(1-abq^{n-1}\left( 1-atq^{n-1}\right)   \notag \\
&&\times \left( 1-btq^{n-1}\right) \hat{\chi}_{n-1}\left( x|a,b,t,q\right) .
\notag
\end{eqnarray}
with $\hat{\chi}_{0}\left( x|a,b,t,q\right) \allowbreak =\allowbreak \chi
_{0}\left( x|a,b,t,q\right) \allowbreak \allowbreak .$ Hence $\hat{\chi}_{n}$
satisfies the same 3-term recurrence as $\psi _{n}$ compare (\ref{_c2h}).
Besides we have 
\begin{gather*}
\chi _{1}\left( x|a,b,t,q\right) \allowbreak =\allowbreak \sum_{i\geq 0}%
\frac{t^{i}}{(q)_{i}}Q_{1+i}\left( x|a,b,q\right)  \\
=\sum_{i\geq 0}\frac{t^{i}}{\left[ i\right] _{q}!}((2x-(a+b)q^{i})Q_{i}%
\left( x|a,b,q\right) \allowbreak \allowbreak -\allowbreak
(1-q^{i})_{q}\left( 1-abq^{i-1}\right) Q_{i-1}\left( x|a,b,q\right)  \\
=(2x-t)\chi _{0}\left( x|a,b,t,q\right) -\left( a+b\right) \chi _{0}\left(
x|a,b,tq,q\right) +tab\chi _{0}\left( x|a,b,tq,q\right) \allowbreak  \\
=\allowbreak (2x-a-b-t+tab)\chi _{0}\left( x|a,b,t,q\right)  \\
+(a+b)t\chi _{1}\left( x|a,b,t,q\right) \allowbreak -\allowbreak t^{2}ab\chi
_{1}\left( x|a,b,t,q\right) .
\end{gather*}%
So:%
\begin{equation*}
\chi _{1}\left( x|a,b,t,q\right) \allowbreak =\allowbreak \frac{%
(2x-a-b-t+tab)}{(1-(a+b)t+t^{2}ab)}\chi _{0}\left( x|a,b,t,q\right) .
\end{equation*}%
Consequently $\hat{\chi}_{1}\left( x|a,b\,,t,q\right) \allowbreak
=\allowbreak (2x-a-b-t+tab)\hat{\chi}_{0}\left( x|a,b,t,q\right) $ and we
deduce that $\hat{\chi}_{-1}\left( x|a,b,t,q\right) \allowbreak =\allowbreak
0.$ Thus we deduce examining equation (\ref{3tr_chi}) that: $\hat{\chi}%
_{n}\left( x|a,b,t,q\right) /\hat{\chi}_{0}\left( x|a,b,t,q\right) $
satisfies 3-term recurrence the same as the one satisfied by continuous dual 
$q-$Hahn polynomials, with the same initial conditions.
\end{proof}

\begin{proof}[Proof of Proposition \protect\ref{Con dla q=0}]
By Proposition \ref{q->0}, iii) we have 
\begin{gather*}
\sum_{j\geq 0}t^{j}\psi _{j+n}\left( x|a,b,c,0\right) \allowbreak
=\allowbreak \\
\sum_{j\geq 0}t^{j}(U_{n+j}\left( x\right) -(a+b+c)U_{n+j-1}\left( x\right)
\allowbreak +\allowbreak (ab+ac+bc)U_{n+j-2}\left( x\right)
-abcU_{n+j-3}\left( x\right) ).
\end{gather*}%
Let us denote $\chi _{n}(x|a,b,c,t)\allowbreak =\allowbreak \sum_{j\geq
0}t^{j}\psi _{j+n}\left( x|a,b,c,0\right) .$ Hence:%
\begin{equation*}
\chi _{n}(x|a,b,c,t)\allowbreak =\allowbreak \alpha _{n}(x,t)-(a+b+c)\alpha
_{n-1}(x,t)+(ab+ac+bc)\alpha _{n-2}(x,t)-abc\alpha _{n-3}(x,t),
\end{equation*}%
where we denoted $\alpha _{n}\left( x,t\right) \allowbreak =\allowbreak
\sum_{j\geq 0}t^{j}U_{j+n}\left( x\right) $. $\alpha _{n}(x,t)$ was already
calculated in Lemma \ref{classical}, (\ref{f_U}) and is equal to $%
\allowbreak \allowbreak \alpha _{0}(x,t)(U_{n}\left( x\right)
-tU_{n-1}\left( x\right) ).\allowbreak $

Hence 
\begin{gather*}
\chi _{n}\left( x|a,b,c,t\right) \allowbreak =\allowbreak (U_{n}\left(
x\right) \allowbreak -\allowbreak tU_{n-1}\left( x\right) \allowbreak
-\allowbreak (a+b+c)U_{n-1}\allowbreak +\allowbreak
(at+bt+ct)U_{n-2}\allowbreak \\
+\allowbreak (ab+ac+bc)U_{n-2}\allowbreak -\allowbreak
(abt+act+bct)U_{n-3}\allowbreak -\allowbreak abcU_{n-3}\allowbreak
+\allowbreak abctU_{n-4}\allowbreak )\alpha _{0}\left( x,t\right)
\allowbreak = \\
(U_{n}\left( x\right) \allowbreak -\allowbreak (a+b+c+t)U_{n-1}\left(
x\right) \allowbreak +\allowbreak (ab+ac+bc+at+bt+ct)U_{n-2}\left( x\right)
\allowbreak \\
-\allowbreak (abc+tab+tac+tbc)U_{n-3}(x)+abctU_{n-4}\left( x\right) )\alpha
_{0}\left( x,t\right) \allowbreak =\allowbreak \allowbreak \allowbreak
AW_{n}\left( x|a,b,c,t,0\right) \alpha _{0}\left( x,t\right) ,
\end{gather*}%
by Proposition \ref{q->0}, iv). On the other hand we have: 
\begin{gather*}
\chi _{0}(x|a,b,c,t)=\sum_{j\geq 0}t^{j}(U_{j}\left( x\right)
-(a+b+c)U_{j-1}\left( x\right) \allowbreak +\allowbreak
(ab+ac+bc)U_{j-2}\left( x\right) -abcU_{j-3}\left( x\right) )\allowbreak = \\
\allowbreak \alpha _{0}\left( x,t\right) \allowbreak (1-\allowbreak
(a+b+c)t+(ab+ac+bc)t^{2}-abct^{3})\allowbreak =\alpha _{0}\left( x,t\right)
\allowbreak (1-at)(1-bt)(1-ct).
\end{gather*}%
So 
\begin{equation*}
\chi _{n}\left( x|a,b,c,t\right) \allowbreak =\allowbreak AW_{n}\left(
x|a,b,c,t,0\right) \frac{\chi _{0}\left( x|a,b,c,t\right) }{%
(1-at)(1-bt)(1-ct)}.
\end{equation*}
\end{proof}

\end{document}